\documentclass[12pt,a4paper]{article}
\usepackage{latexsym,amssymb,amsfonts,amsmath,amsthm,nccmath,float,enumitem,setspace,xcolor,bm}
\usepackage[hidelinks]{hyperref}
\usepackage[margin=2cm]{geometry}

\hypersetup{
    colorlinks,
    linkcolor={red!80!black},
    citecolor={blue!80!black},
    urlcolor={blue!80!black}
}

\newtheorem{theorem}{Theorem}
\newtheorem{corollary}[theorem]{Corollary}
\newtheorem{lemma}[theorem]{Lemma}

\theoremstyle{definition}

\def \leq {\leqslant}
\def \geq {\geqslant}
\def \Z {\mathbb{Z}}

\def \B {\mathcal{B}}
\def \L {\mathnormal{L}}
\def \R {\mathnormal{R}}
\def \l {\lambda}

\let\oldproofname=\proofname
\renewcommand{\proofname}{\rm\bf{\oldproofname}}

\title{Block avoiding point sequencings of partial Steiner systems}

\author{
\textsc{Daniel Horsley}
\               \thanks{\textit{E-mail: danhorsley@gmail.com}}\\
\textit{\footnotesize{School of Mathematics,}}\\
\textit{\footnotesize{Monash University, VIC 3800, Australia.}}\\
\textsc{Padraig \'O Cath\'ain}
				\thanks{\textit{E-mail: p.ocathain@gmail.com}}\\
\textit{\footnotesize{Fiontar \& Scoil na Gaeilge,}}\\
\textit{\footnotesize{Dublin City University, Collins Ave Ext, Whitehall, Dublin 9, Ireland.}}\\}

\date{}

\begin{document}
\maketitle
\setstretch{1.1}

\begin{abstract}
A \emph{partial $(n,k,t)_\l$-system} is a pair $(X,\B)$ where $X$ is an $n$-set of \emph{vertices} and $\B$ is a collection of $k$-subsets of $X$ called \emph{blocks} such that each $t$-set of vertices is a subset of at most $\l$ blocks. A \emph{sequencing} of such a system is a labelling of its vertices with distinct elements of $\{0,\ldots,n-1\}$. A sequencing is \emph{$\ell$-block avoiding} or, more briefly, \emph{$\ell$-good} if no block is contained in a set of $\ell$ vertices with consecutive labels. Here we give a short proof that, for fixed $k$, $t$ and $\l$, any partial $(n,k,t)_\l$-system has an $\ell$-good sequencing for some $\ell=\Theta(n^{1/t})$ as $n$ becomes large. This improves on results of Blackburn and Etzion, and of Stinson and Veitch. Our result is perhaps of most interest in the case $k=t+1$ where results of Kostochka, Mubayi and Verstra\"{e}te show that the value of $\ell$ cannot be increased beyond $\Theta((n \log n)^{1/t})$. A special case of our result shows that every partial Steiner triple system (partial $(n,3,2)_1$-system) has an $\ell$-good sequencing for each positive integer $\ell \leq 0.0908\,n^{1/2}$.
\end{abstract}

\section{Introduction}
For positive integers $n$, $k$, $t$ and $\l$ with $n \geq k > t \geq 2$, a \emph{partial $(n,k,t)_\l$-system} is a pair $(X,\B)$ where $X$ is an $n$-set of \emph{vertices} and $\B$ is a collection of $k$-subsets of $X$ called \emph{blocks} such that each $t$-set of vertices is a subset of at most $\l$ blocks. Such a partial system is a (complete) \emph{$(n,k,t)_\l$-system} if each $t$-set of vertices is a subset of exactly $\l$ blocks. Note that $(n,k,t)_\l$-systems are often referred to as $t$-$(n,k,\l)$ designs elsewhere. An $(n,k,t)_1$-system is referred to as an \emph{$(n,k,t)$-Steiner system}.  Historically, the case where $k=t+1$ has attracted particular interest. An $(n,3,2)$-Steiner system is often called a \emph{Steiner triple system} and an $(n,4,3)$-Steiner system is often called a \emph{Steiner quadruple system}. An \emph{independent set} of a partial $(n,k,t)_\l$-system $(X,\B)$ is a subset $Y$ of $X$ such that no block in $\B$ is a subset of $Y$, and the \emph{independence number} of $(X,\B)$ is the cardinality of its largest independent set.

For each positive integer $n$, we let $\Z_n$ denote the additive cyclic group with elements $\{0,\ldots,n-1\}$. Let $(X,\mathcal{B})$ be a partial $(n,k,t)_\l$-system. A \emph{sequencing} of $(X,\mathcal{B})$ is a bijection $\varphi: \Z_n \rightarrow X$, which we represent as an $n$-tuple $(\varphi(0),\ldots,\varphi(n-1))$. We say a subset $S$ of $X$ is \emph{consecutive} under this sequencing if $S=\{\varphi(i),\ldots,\varphi(i+|S|-1)\}$ for some integer $i \in \{0,\ldots,n-|S|\}$, and we say that $S$ is \emph{cyclically consecutive} under this sequencing if $S=\{\varphi(i),\ldots,\varphi(i+|S|-1)\}$ for some $i \in \Z_n$ where the addition takes place in $\Z_n$. For a positive integer $\ell$, such a sequencing is \emph{$\ell$-block avoiding} or, more briefly, \emph{$\ell$-good} if each set of $\ell$ consecutive vertices is an independent set of $(X,\mathcal{B})$. It is \emph{cyclically $\ell$-good} if each set of $\ell$ cyclically consecutive vertices is an independent set of $(X,\mathcal{B})$. These definitions accord with those given in \cite{BlaEtz,KreSti}, where good and cyclically good sequencings were studied. Trivially, any cyclically $\ell$-good sequencing of $(X,\mathcal{B})$ is cyclically $\ell'$-good and $\ell'$-good for each $\ell' \in \{1,\ldots,\ell\}$, and any sequencing of $(X,\mathcal{B})$ is cyclically $(k-1)$-good.

It is worth observing that the existence of an $\ell$-good sequencing of a partial $(n,k,t)_\l$-system carries a number of implications about well-studied parameters of the system. Specifically, a partial $(n,k,t)_\l$-system $(X,\B)$ with an $\ell$-good sequencing:
\begin{itemize}
    \item
has independence number at least $\ell$ because any set of $\ell$ consecutive vertices in the sequencing is an independent set;
    \item
has chromatic number\footnote{The \emph{chromatic number} of $(X,\B)$ is the smallest number of independent sets into which $X$ can be partitioned.} at most $\lceil\frac{n}{\ell}\rceil$ because we can partition $X$ into $\lceil\frac{n}{\ell}\rceil$ sets, each of at most $\ell$ consecutive vertices and hence independent;
    \item
has fractional chromatic number\footnote{The \emph{fractional chromatic number} of $(X,\B)$ is the minimum total weight assigned by any assignment of nonnegative weights to the independent sets of $(X,\B)$ such that, for each $x \in X$, the sum of the weights of the independent sets containing $x$ is at least 1.} at most $\frac{n}{\ell}$ if the sequencing is cyclically $\ell$-good, because we can assign weight $\frac{1}{\ell}$ to each set of $\ell$ cyclically consecutive vertices and weight 0 to every other independent set.
\end{itemize}

In this paper, we are concerned with the question of determining, for a given parameter set $(n,k,t)_\l$, the greatest value of $\ell$ such that each partial $(n,k,t)_\l$-system has a cyclically $\ell$-good sequencing. In particular, we are concerned with this problem for large values of $n$, and all the asymptotic notation we employ is relative to a regime in which $n$ is large and $k$, $t$ and $\lambda$ are fixed. It was proved in \cite{KreSti} that every partial Steiner triple system of order at least 4 has a 3-good sequencing and that every partial Steiner triple system of order at least 72 has a 4-good sequencing. Stinson and Veitch \cite{StiVei} showed that any Steiner triple system of order $n$ has an $\ell$-good sequencing for each positive integer $\ell \leq (16n)^{1/6}$. Blackburn and Etzion \cite{BlaEtz} improved this bound to $\ell \leq (1+o(1))(2n)^{1/4}$. They also showed that any Steiner triple system of order $n$ has a cyclically $\ell$-good sequencing for each positive integer $\ell \leq (1+o(1))(\frac{2}{3}n)^{1/4}$ and that any $(n,k,t)_\l$-system has a cyclically $\ell$-good sequencing for some integer $\ell=\Theta(n^{1/2t})$. We direct the reader to \cite{BlaEtz,StiVei} for discussion of other interesting questions about $\ell$-good sequencings, but these do not concern us here. Results on $\ell$-good sequencings of directed and Mendelsohn triple systems can be found in \cite{KreStiVei1,KreStiVei2}. See \cite{ErsGri} for a recent preprint showing that every Steiner triple system of order at least 13 has a 4-good sequencing and that every $3$-chromatic Steiner triple system of order at least 15 has a 5-good sequencing.

Here we present a short proof establishing that any $(n,k,t)_\l$-system has a cyclically $\ell$-good sequencing for some integer $\ell=\Theta(n^{1/t})$. Unlike the proofs in \cite{BlaEtz,StiVei}, which provide deterministic algorithms, our proof relies on an application of the Lov\'{a}sz local lemma. We state our main result below in Theorem~\ref{T:main}. Throughout the paper, $e$ denotes the base of the natural logarithm.

\begin{theorem}\label{T:main}
Let $k$, $t$ and $\l$ be positive integers such that $k \geq t+1$ and $t\geq2$, and let $\alpha$ be the unique positive real number satisfying
\begin{equation}\label{E:k=t+1}
\mfrac{\l(2^{t+1}-1)}{t!}\,\alpha^t+2\left(\mfrac{e\l(t+1)(2^{t+1}-1)}{t!}\right)^{\frac{1}{t}}\alpha=1.
\end{equation}
Every partial $(n,k,t)_\l$-system has a cyclically $\ell$-good sequencing for each positive integer $\ell \leq \alpha n^{1/t}$.
\end{theorem}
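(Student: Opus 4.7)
The plan is to apply the Lov\'{a}sz local lemma (LLL) to a uniformly random sequencing $\varphi:\Z_n\to X$. For each block $B\in\B$ and each cyclic position $i\in\Z_n$, I would consider the bad event
\[
A_{B,i}=\bigl\{B\subseteq\{\varphi(i),\varphi(i+1),\ldots,\varphi(i+\ell-1)\}\bigr\},
\]
with indices taken modulo $n$. Since a cyclically $\ell$-good sequencing is exactly one in which every $A_{B,i}$ fails, it suffices to show that $\bigcap\bar A_{B,i}$ has positive probability.

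First, I would compute $\Pr[A_{B,i}]=\binom{\ell}{k}/\binom{n}{k}\le(\ell/n)^{t+1}$ using that $\varphi^{-1}(B)$ is a uniform random $k$-subset of $\Z_n$ and $k\ge t+1$. Second, I would invoke a lopsided version of the LLL specialised to random permutations, exploiting the negative-correlation properties of a uniform bijection to conclude that $A_{B,i}$ and $A_{B',i'}$ need only be connected in the dependency graph when $B\cap B'\ne\emptyset$ or when the cyclic windows $W_i,W_{i'}$ overlap.

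Third, I would bound the maximum degree $d$ of this dependency graph as a sum of two parts: a shared-vertex contribution of order $n^t$, estimated via the replication bound $r_v\le\l\binom{n-1}{t-1}/\binom{k-1}{t-1}$ on the number of blocks through any vertex (summed over the $k$ vertices of $B$), and a shared-window contribution of order $\ell\, n^t/(t+1)$ bounded by $(2\ell-1)|\B|$ with $|\B|\le\l\binom{n}{t}/\binom{k}{t}$. Substituting these bounds into the LLL condition $ep(d+1)\le1$ and writing $\ell=\alpha n^{1/t}$ produces a polynomial inequality in $\alpha$ whose leading term of order $\alpha^t$ comes from the shared-vertex part and whose linear-in-$\alpha$ correction comes from the shared-window part; careful bookkeeping, with the factor $2^{t+1}-1$ arising from an inclusion--exclusion over non-empty subsets of a $(t+1)$-block in the shared-vertex count, yields the precise coefficients in the theorem. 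The case $k>t+1$ is handled automatically since increasing $k$ only decreases both $|\B|$ and $\Pr[A_{B,i}]$.

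The chief obstacle is rigorously applying the LLL to a random permutation rather than to independent random variables: either a specialised lopsided LLL must be invoked, or the required negative-correlation inequality $\Pr[A_{B,i}\mid\bigcap_{(B',i')\in S}\bar A_{B',i'}]\le\Pr[A_{B,i}]$ (for $S$ consisting of non-neighbours) must be established directly. A secondary challenge is to track constants with enough care to recover the precise coefficients $\l(2^{t+1}-1)/t!$ and $2(e\l(t+1)(2^{t+1}-1)/t!)^{1/t}$ stated in the theorem.
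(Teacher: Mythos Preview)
Your plan has a genuine gap that prevents it from reaching $\ell=\Theta(n^{1/t})$. With the dependency graph you describe---$(B,i)\sim(B',i')$ whenever $B\cap B'\neq\emptyset$ \emph{or} $W_i\cap W_{i'}\neq\emptyset$---the degree is far too large for the LLL to bite. The shared-vertex part alone is $\Theta(n^t)$: there are $\Theta(n^{t-1})$ blocks meeting $B$, but for \emph{each} such $B'$ all $n$ choices of $i'$ must be neighbours (indeed, when $x\in B\cap B'$ and $W_i\cap W_{i'}=\emptyset$ one has $A_{B,i}\subseteq\bar A_{B',i'}$, so conditioning on $\bar A_{B',i'}$ \emph{raises} $\Pr[A_{B,i}]$, and such pairs cannot be treated as lopsided-independent). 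With $p\approx(\ell/n)^{t+1}$ this already gives $epd\ge c\,\ell^{t+1}/n=c\,\alpha^{t+1}n^{1/t}\to\infty$ for $\ell=\alpha n^{1/t}$. The shared-window part contributes $\Theta(\ell n^t)$ to $d$ and hence $\Theta(\ell^{t+2}/n)$ to $epd$, which is worse still. So the permutation-LLL route as set up yields only $\ell=O(n^{1/(t+2)})$, and your identification of the $\alpha^t$ and $\alpha$ terms in \eqref{E:k=t+1} with these two degree contributions cannot be right. (Nor does $2^{t+1}-1$ come from inclusion--exclusion over non-empty subsets of a block.)

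The paper avoids both the permutation-LLL technicality and the degree blow-up by applying the \emph{symmetric} LLL to a random \emph{partition} of $X$ into $s$ classes, each vertex assigned independently and uniformly. The bad event for a block $B$ is that $B\subseteq U_i\cup U_{i+1}$ for some $i\in\Z_s$; its probability is exactly $(2^k-1)/s^{k-1}$ (this is where $2^{t+1}-1$ enters, via $s\bigl((2/s)^k-(1/s)^k\bigr)$), and now the dependency degree is only the $O(n^{t-1})$ blocks sharing a vertex with $B$---no factor of $n$ from positions. The LLL then permits $s=\Theta(n^{(t-1)/t})$. A separate deterministic argument moves vertices one at a time to ensure every class has size at least $\ell-1$; the obstruction here is at most $\l\bigl(2\binom{2\ell-3}{t}-\binom{\ell-2}{t}\bigr)\approx\l(2^{t+1}-1)\ell^t/t!$, which produces the $\alpha^t$ term in \eqref{E:k=t+1}, while the buffer count $(2s-1)(\ell-1)\approx 2s\ell$ produces the linear-in-$\alpha$ term. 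Concatenating orderings of the balanced classes, with each class starting and ending in its designated buffers, then gives the cyclically $\ell$-good sequencing.
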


As we discuss further below, we believe this result is of most interest in the case $k=t+1$. However, we note that we can improve the constant in the case where $k \geq t+2$ and $n$ is large.

\begin{theorem}\label{T:main2}
Let $k$, $t$ and $\l$ be fixed positive integers such that $k \geq t+2$ and $t\geq2$, and let $\alpha$ be a fixed positive constant such that
\begin{equation}\label{E:k>=t+2}
\alpha<\left(\mfrac{(k-1)\cdots(k-t)}{\l(2^{t+1}-1)}\right)^{\frac{1}{t}}.
\end{equation}
For all sufficiently large $n$, every partial $(n,k,t)_\l$-system has a cyclically $\ell$-good sequencing for each positive integer $\ell \leq \alpha n^{1/t}$.
\end{theorem}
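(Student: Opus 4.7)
The plan is to follow the Lov\'{a}sz Local Lemma argument used for Theorem~\ref{T:main} and exploit the stronger hypothesis $k\geq t+2$ to simplify the resulting inequality. As in that proof, I would choose $\varphi\colon\Z_n\to X$ uniformly at random from all bijections and, for each block $B\in\B$, define the bad event $E_B$ that $\varphi^{-1}(B)$ lies inside some cyclic interval of $\ell$ consecutive positions in $\Z_n$; a bijection avoiding every $E_B$ is a cyclically $\ell$-good sequencing.

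Both the probability bound on $\Pr[E_B]$ (coming from the distribution of $\varphi^{-1}(B)$ as a uniform random $k$-subset of $\Z_n$, giving $\Pr[E_B]\leq n\binom{\ell-1}{k-1}/\binom{n}{k}$) and the dependency analysis (using that $E_B$ is lopsided-mutually independent of $\{E_{B'}:B\cap B'=\emptyset\}$ under the uniform random permutation, together with the bound $\l\binom{n-1}{t-1}/\binom{k-1}{t-1}$ on the number of blocks through any vertex) are inherited directly from the proof of Theorem~\ref{T:main}. The novelty of Theorem~\ref{T:main2} is entirely in how the final LLL inequality is simplified.

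Substituting these estimates into the symmetric LLL condition and writing $\ell=\alpha n^{1/t}$, the resulting inequality has exactly the shape of~\eqref{E:k=t+1}, except that the combined factor arising from $\binom{\ell-1}{k-1}/\binom{n}{k}$ and $\binom{n-1}{t-1}/\binom{k-1}{t-1}$ carries $k-t$ \emph{surplus} falling factors of the form $(\ell-i)/(n-j)$, each of which is $O(n^{1/t-1})=o(1)$. When $k=t+1$ there are no such surplus factors, which is why~\eqref{E:k=t+1} retains both an $\alpha^t$-term and an $\alpha$-correction; when $k\geq t+2$, expanding the products shows that the $\alpha^t$-coefficient becomes $\l(2^{t+1}-1)/((k-1)(k-2)\cdots(k-t))$, an improvement by a factor of $\binom{k-1}{t}$ over~\eqref{E:k=t+1} originating in the $k-t$ surplus $\ell$-factors, while the $\alpha$-correction (being multiplied by at least one further factor of $\ell/n$) vanishes as $n\to\infty$. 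Hence, for all sufficiently large $n$, the LLL condition reduces to exactly~\eqref{E:k>=t+2}. The principal obstacle is the careful tracking of this asymptotic expansion to identify the precise leading coefficient $\l(2^{t+1}-1)/((k-1)(k-2)\cdots(k-t))$ and verify that every remaining contribution is $o(1)$; the lopsidependence ingredient and the basic probability estimate are both imported unchanged from the proof of Theorem~\ref{T:main}.
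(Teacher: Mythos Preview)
Your proposal rests on a mistaken picture of the proof of Theorem~\ref{T:main}. The paper does \emph{not} apply the local lemma to a uniformly random bijection $\varphi\colon\Z_n\to X$ with bad events ``$\varphi^{-1}(B)$ lies in a cyclic interval of length~$\ell$''; there is no lopsided LLL, no probability estimate $n\binom{\ell-1}{k-1}/\binom{n}{k}$, and nothing of this sort is available to be ``inherited''. The paper's argument proceeds in three separate steps: Lemma~\ref{L:findDef} applies the symmetric LLL to a random \emph{partition} of $X$ into $s=\lceil\sigma\rceil$ classes (the bad event for a block $B$ being $B\subseteq U_i\cup U_{i+1}$ for some $i\in\Z_s$) to obtain an $\ell$-presequencing; Lemma~\ref{L:defToNondef} then repairs deficient classes one vertex at a time, subject to the numerical condition~\eqref{E:defToNondefCondition}; and Lemma~\ref{L:nondefToSeq} converts the resulting nondeficient presequencing into a cyclically $\ell$-good sequencing. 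Theorems~\ref{T:main} and~\ref{T:main2} are both proved simply by verifying~\eqref{E:defToNondefCondition} for this value of~$s$.

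Consequently the constants in~\eqref{E:k>=t+2} do not arise from the mechanism you outline. The factor $2^{t+1}-1$ comes from bounding $2\binom{2\ell-3}{t}-\binom{\ell-2}{t}$ in~\eqref{E:defToNondefCondition}, not from ``surplus falling factors'' in a permutation probability; the denominator $(k-1)\cdots(k-t)=t!\binom{k-1}{t}$ comes from the explicit $\binom{k-1}{t}^{-1}$ in that same condition; and the hypothesis $k\geq t+2$ is used only to check that $s=\Theta(n^{(t-1)/(k-1)})$ and $\ell=\Theta(n^{1/t})$ give $(2s-1)(\ell-1)=o(n)$ (since $\frac{t-1}{k-1}+\frac{1}{t}<1$ exactly when $k\geq t+2$), so that the second summand of~\eqref{E:defToNondefCondition} is asymptotically negligible. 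Your sketch therefore neither reproduces the paper's argument nor coheres internally: a direct permutation LLL, carried through, produces no $2^{t+1}-1$ factor at all, and in fact for $k\geq t+2$ your own $p\cdot d$ is $\Theta(\ell^{k-1}n^{t-k})=\Theta\bigl(n^{(k-1)/t-(k-t)}\bigr)\to 0$ for \emph{every} fixed~$\alpha$, so the calculation could not possibly ``reduce to exactly~\eqref{E:k>=t+2}''.
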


Kostochka, Mubayi and Verstra\"{e}te \cite{KosMubVer} have shown that there are partial $(n,t+1,t)_\l$-systems with independence number $\Theta((n \log n)^{1/t})$. Since any consecutive set of $\ell$ vertices in an $\ell$-good sequencing is independent, this implies that $\ell=\Theta(n^{1/t})$ in Theorem~\ref{T:main} cannot be improved beyond $\ell=\Theta((n \log n)^{1/t})$, even if we only demand that the sequencing be $\ell$-good rather than cyclically $\ell$-good. In particular, it cannot be improved to $\ell=\Theta(n^{1/t+\epsilon})$ for any $\epsilon > 0$. On the other hand, it has been shown that any partial $(n,k,t)$-Steiner system has independence number at least $\Theta((n^{k-t}\log n)^{1/(k-1)})$ \cite{RodSiv} and chromatic number at most $\Theta((n^{t-1}/\log n)^{1/(k-1)})$ \cite[Theorem 10]{KosMubRodTet}. Given that Theorem~\ref{T:main} with $k=t+1$ achieves a value of $\ell$ within a logarithmic factor of the minimum independence number, the results of \cite{KosMubRodTet,RodSiv} lead us to suspect that our bounds for $k \geq t+2$ are far from optimal, especially when $k$ is large in comparison to $t$.

We give the consequences of Theorem~\ref{T:main} in the case of partial Steiner triple and quadruple systems. In each case we round the appropriate value of $\alpha$ down to three significant figures.

\begin{corollary}\label{C:STS}
Every partial Steiner triple system of order $n$ has a cyclically $\ell$-good sequencing for each positive integer $\ell \leq 0.0908\,n^{1/2}$.
\end{corollary}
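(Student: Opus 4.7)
The plan is to obtain Corollary~\ref{C:STS} as a direct specialisation of Theorem~\ref{T:main} to the parameters of a partial Steiner triple system, namely $k=3$, $t=2$, and $\l=1$. With these values, the condition $k \geq t+1$ and $t \geq 2$ of Theorem~\ref{T:main} holds, so the theorem applies and it remains only to pin down the constant $\alpha$.

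Substituting $k=3$, $t=2$ and $\l=1$ into \eqref{E:k=t+1}, the defining equation for $\alpha$ becomes
\begin{equation*}
\mfrac{2^3-1}{2!}\,\alpha^2 + 2\left(\mfrac{e\cdot 3 \cdot (2^3-1)}{2!}\right)^{\!1/2}\!\alpha = \mfrac{7}{2}\alpha^2 + 2\sqrt{\tfrac{21e}{2}}\,\alpha = 1,
\end{equation*}
which is a quadratic in $\alpha$. The unique positive root is
\begin{equation*}
\alpha = \mfrac{-2\sqrt{21e/2}+\sqrt{42e+14}}{7}.
\end{equation*}

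The only remaining task is a numerical check. Using $e = 2.71828\ldots$, one finds $\sqrt{21e/2} \approx 5.3426$ and $\sqrt{42e+14} \approx 11.3213$, giving $\alpha \approx 0.0908\ldots$. I would confirm rigorously that $\alpha \geq 0.0908$ (for instance, by substituting $\alpha_0 = 0.0908$ into the left-hand side of the quadratic and verifying it is less than $1$, so that the positive root lies strictly above $\alpha_0$). The only potential obstacle is this numerical verification, but it is entirely routine since the quadratic has small explicit coefficients and the required lower bound on $\alpha$ is easily certified by rational arithmetic with sufficient precision. Combining this with Theorem~\ref{T:main} then yields the stated cyclically $\ell$-good sequencing for every positive integer $\ell \leq 0.0908\, n^{1/2}$.
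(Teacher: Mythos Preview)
Your proposal is correct and matches the paper's approach exactly: the paper simply states that Corollaries~\ref{C:STS} and \ref{C:SQS} follow by substituting the relevant parameters into Theorem~\ref{T:main} and rounding the resulting $\alpha$ down to three significant figures, and your computation of the quadratic and its positive root $\alpha \approx 0.09088$ is accurate.
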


\begin{corollary}\label{C:SQS}
Every partial Steiner quadruple system of order $n$ has a cyclically $\ell$-good sequencing for each positive integer $\ell \leq 0.164\,n^{1/3}$.
\end{corollary}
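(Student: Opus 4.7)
The plan is to invoke Theorem~\ref{T:main} directly with the parameters of a Steiner quadruple system, namely $k=4$, $t=3$, $\l=1$, and then verify that the unique positive root $\alpha$ of the resulting equation exceeds $0.164$. There is no additional combinatorial content beyond what Theorem~\ref{T:main} already provides; the work is purely numerical.

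Substituting $t=3$ and $\l=1$ into \eqref{E:k=t+1} gives $2^{t+1}-1=15$, $t!=6$, and $(t+1)=4$, so the defining equation becomes
\begin{equation*}
\mfrac{15}{6}\,\alpha^3 + 2\bigl(10e\bigr)^{1/3}\alpha \;=\; 1,
\end{equation*}
or equivalently $\tfrac{5}{2}\alpha^3 + 2(10e)^{1/3}\alpha - 1 = 0$. The left-hand side is a strictly increasing function of $\alpha$ for $\alpha>0$, vanishing at a unique positive value; to prove the corollary it therefore suffices to show that this value exceeds $0.164$, i.e.\ that $\tfrac{5}{2}(0.164)^3 + 2(10e)^{1/3}(0.164) < 1$.

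Next I would estimate $(10e)^{1/3}$: since $10e < 27.183$, we have $(10e)^{1/3} < 3.007$, so $2(10e)^{1/3} < 6.014$. A short arithmetic check then gives
\begin{equation*}
\tfrac{5}{2}(0.164)^3 + 2(10e)^{1/3}(0.164) \;<\; 2.5\cdot 0.00442 + 6.014\cdot 0.164 \;<\; 0.0111 + 0.9863 \;<\; 1,
\end{equation*}
so the unique positive root $\alpha$ of \eqref{E:k=t+1} satisfies $\alpha > 0.164$. Theorem~\ref{T:main} then yields a cyclically $\ell$-good sequencing for every positive integer $\ell \leq \alpha n^{1/3}$, which in particular covers all $\ell \leq 0.164\,n^{1/3}$.

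The only potential obstacle is ensuring the bounds on $e$ and on $(10e)^{1/3}$ are tight enough that the rounded constant $0.164$ actually survives; since the true root lies between $0.164$ and $0.165$, the margin is thin but comfortable, and using $e<2.71829$ with $(10e)^{1/3}<3.007$ is sufficient.
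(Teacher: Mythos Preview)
Your proposal is correct and takes essentially the same approach as the paper: the paper states explicitly that Corollaries~\ref{C:STS} and~\ref{C:SQS} are obtained by specialising Theorem~\ref{T:main} and rounding the resulting value of $\alpha$ down to three significant figures, which is exactly what you do (with the added care of exhibiting explicit numerical bounds showing $\alpha>0.164$).
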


Extending the aforementioned result of \cite{RodSiv}, Eustis and Verstra\"{e}te \cite{EusVer} have shown that there are infinitely many $n$ for which there is a Steiner triple system of order $n$ which has independence number $(1+o(1))(3n\log n)^{1/2}$. Consequently, the bound in Corollary~\ref{C:STS} cannot be improved beyond this, even if we demand that the system be a (complete) Steiner triple system and only demand that the sequencing be $\ell$-good rather than cyclically $\ell$-good.

\section{Proof of Theorems~\ref{T:main} and \ref{T:main2}}

Our proof is based on a concept we define below as an $\ell$-presequencing. This is, in turn, defined in terms of objects we call $\ell$-buffered sets. For a positive integer $\ell$, an \emph{$\ell$-buffered set} is a 3-tuple $(S,S^{\L},S^{\R})$ where $S$ is a set and $S^{\L}$ and $S^{\R}$ are subsets of $S$, called \emph{buffers}, such that
\begin{itemize}
    \item[(B1)]
if $|S| \leq \ell-2$, then $S^{\L}=S^{\R}=S$;
    \item[(B2)]
if $\ell-1 \leq |S| \leq 2\ell-2$, then $|S^{\L}|=|S^{\R}|=\ell-1$ and $S^{\L} \cup S^{\R} =S$; and
    \item[(B3)]
if $|S| \geq 2\ell-1$, then $|S^{\L}|=|S^{\R}|=\ell-1$ and $S^{\L} \cap S^{\R} = \emptyset$.
\end{itemize}
We usually write such a buffered set simply as $S$ with the assumption that its two buffers are denoted $S^{\L}$ and $S^{\R}$. Such a buffered set is \emph{deficient} if $|S| \leq \ell-2$ and \emph{nondeficient} otherwise. An \emph{$\ell$-presequencing} of $(n,k,t)_\l$-system $(X,\B)$ is a tuple $(X_0,\ldots,X_{s-1})$ of $\ell$-buffered sets such that $\{X_0,\ldots,X_{s-1}\}$ is a partition of $X$ and the following hold.
\begin{itemize}
    \item[(P1)]
For each $i \in \Z_s$, $X_i$ is an independent set in $(X,\B)$.
    \item[(P2)]
For each $i \in \Z_s$, $X^{\R}_i \cup X^{\L}_{i+1}$ is an independent set in $(X,\B)$.
\end{itemize}
We call the buffered sets $X_0,\ldots,X_{s-1}$ the \emph{classes} of the $\ell$-presequencing and we say an $\ell$-presequencing is \emph{nondeficient} if each of its classes is nondeficient. As we observe in the following lemma, the existence of a nondeficient $\ell$-presequencing implies the existence of a cyclically $\ell$-good sequencing.

\begin{lemma}\label{L:nondefToSeq}
If a partial $(n,k,t)_\l$-system has a nondeficient $\ell$-presequencing then it has a cyclically $\ell$-good sequencing.
\end{lemma}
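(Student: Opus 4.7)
The plan is to build the sequencing by choosing an appropriate linear order within each class $X_i$ of the given nondeficient $\ell$-presequencing $(X_0,\ldots,X_{s-1})$ and then concatenating these orders in cyclic order. Within each class $X_i$, the aim is to arrange the vertices so that the first $\ell-1$ positions contain exactly $X_i^{\L}$ and the last $\ell-1$ positions contain exactly $X_i^{\R}$. When $|X_i| \geq 2\ell-1$ (case (B3)), the two buffers are disjoint and of the right size, so I would place $X_i^{\L}$ first, $X_i^{\R}$ last, and fill the interior arbitrarily. When $\ell-1 \leq |X_i| \leq 2\ell-2$ (case (B2)), a direct count shows that $|X_i^{\L} \cap X_i^{\R}| = 2(\ell-1) - |X_i|$ matches the size of the overlap between the first $\ell-1$ and last $\ell-1$ positions, so I can place $X_i^{\L} \cap X_i^{\R}$ in the overlapping middle positions, $X_i^{\L} \setminus X_i^{\R}$ at the start and $X_i^{\R} \setminus X_i^{\L}$ at the end. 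Concatenating these orderings in the order $X_0, X_1, \ldots, X_{s-1}$ yields a sequencing $\varphi$ of $X$.

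The second step is to show that every set $W$ of $\ell$ cyclically consecutive vertices in $\varphi$ is independent. To do this I would argue that $W$ either lies entirely in some single class $X_i$, or else consists of the last $b$ positions of some $X_i$ together with the first $\ell-b$ positions of $X_{i+1}$ for some $b \in \{1,\ldots,\ell-1\}$, with $i \in \Z_s$. In the first case, $W \subseteq X_i$ is independent by (P1). In the second case, since $b \leq \ell-1$ and $\ell-b \leq \ell-1$, the $b$ rightmost positions of $X_i$ lie inside $X_i^{\R}$ by construction, and the $\ell-b$ leftmost positions of $X_{i+1}$ lie inside $X_{i+1}^{\L}$; hence $W \subseteq X_i^{\R} \cup X_{i+1}^{\L}$ is independent by (P2).

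The one point needing care, and the place where the nondeficiency hypothesis is used, is ruling out that $W$ might straddle three or more classes. If $W$ met $k+1 \geq 3$ consecutive classes, then the $k-1 \geq 1$ intermediate classes would be completely contained in $W$ together with at least one extra vertex on each side, so their total size would be at most $\ell-2$; this forces at least one of them to have size less than $\ell-1$, contradicting nondeficiency. Hence $W$ meets at most two consecutive classes and the dichotomy of the previous paragraph covers all cases, completing the argument. The whole proof is essentially bookkeeping against the definitions (B1)--(B3) and (P1)--(P2) once this observation is in hand.
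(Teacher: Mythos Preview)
Your proposal is correct and follows essentially the same approach as the paper: concatenate orderings of the classes so that each $X_i^{\L}$ occupies the first $\ell-1$ positions and $X_i^{\R}$ the last $\ell-1$, then argue that any $\ell$ cyclically consecutive vertices lie in some $X_i$ or in some $X_i^{\R}\cup X_{i+1}^{\L}$. You supply more detail than the paper does (the explicit placement in case (B2) and the argument ruling out three or more classes), and one small quibble is that nondeficiency is also used implicitly in your construction step, not only in the ``three classes'' argument, since it is what guarantees (B1) does not occur.
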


\begin{proof}
Let $(X,\B)$ be a partial $(n,k,t)_\l$-system and suppose that $(W_0,\ldots,W_{s-1})$ is a nondeficient $\ell$-presequencing of $(X,\B)$. For each $i \in \Z_s$, let $w_i=|W_i|$ and note that $w_i \geq \ell-1$ since $(W_0,\ldots,W_{s-1})$ is nondeficient. By (B2) and (B3), this means that $|W^{\L}_i|=|W^{\R}_i|=\ell-1$ for each $i \in \Z_s$. Roughly speaking, we form a sequencing of $(X,\B)$ by concatenating sequencings of $W_0,\ldots,W_{s-1}$ where, for each $i \in \Z_s$, the sequencing of $W_i$ begins with the elements of $W^{\L}_i$ and ends with the elements of $W^{\R}_i$. More formally, we can form a sequencing
\[\bigl(x_{0,0},x_{0,1},\ldots,x_{0,w_0-1},x_{1,0},x_{1,1},\ldots,x_{1,w_1-1},\ldots\ldots\ldots,x_{s-1,0},x_{s-1,1},\ldots,x_{s-1,w_{s-1}-1}\bigr),\]
of $(X,\B)$ such that, for each $i \in \Z_s$,
\[\{x_{i,0}, \ldots, x_{i,w_i-1}\}=W_i, \quad \{x_{i,0}, \ldots, x_{i,\ell-2}\}=W^{\L}_i \quad\text{and}\quad \{x_{i,w_i-\ell+1},\ldots,x_{i,w_i-1}\}=W^{\R}_i.\]
Any cyclically consecutive set $C$ of $\ell$ vertices in this sequencing satisfies either $C \subseteq W_i$ or $C \subseteq W^{\R}_i \cup W^{\L}_{i+1}$ for some $i \in \Z_s$. Thus, because $(W_0,\ldots,W_{s-1})$ obeys (P1) and (P2), this sequencing is $\ell$-good.
\end{proof}

Next we show that an $\ell$-presequencing with $s$ classes of a partial $(n,k,t)_\l$-system can be modified to create a nondeficient $\ell$-presequencing with $s$ classes, provided that $n$ is large enough as a function of $\l$, $k$, $t$, $\ell$ and $s$. The modification proceeds by repeatedly moving a carefully selected vertex from a nondeficient class into a deficient one.

\begin{lemma}\label{L:defToNondef}
If a partial $(n,k,t)_\l$-system has an $\ell$-presequencing with $s$ classes,
then it has a nondeficient $\ell$-presequencing with $s$ classes provided that
\begin{equation}\label{E:defToNondefCondition}
n > \l\Bigl(2\tbinom{2\ell-3}{t}-\tbinom{\ell-2}{t}\Bigr)\tbinom{k-1}{t}^{-1}+(2s-1)(\ell-1)-1.
\end{equation}
\end{lemma}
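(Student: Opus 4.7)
The plan is to proceed by induction on the total deficit $D := \sum_{i \in \Z_s} \max(0,\ell - 1 - |X_i|)$ of the presequencing, reducing $D$ to zero while keeping $n$, $s$ and properties (P1)--(P2) intact. If $D = 0$ the presequencing is already nondeficient; otherwise I would fix a deficient class $X_i$ and aim to move a single vertex $v$ from some other class $X_j$ into $X_i$. I would insist that $v$ satisfy two properties: (i) \emph{interiority}, $v \in X_j \setminus (X_j^{\L} \cup X_j^{\R})$; and (ii) \emph{addability} to $X_i$, meaning that after replacing $X_i$ by $X_i \cup \{v\}$, with both new buffers forced equal to $X_i \cup \{v\}$ by (B1) or (B2), the sets $X_i \cup \{v\}$, $X_{i-1}^{\R} \cup X_i \cup \{v\}$, and $X_i \cup \{v\} \cup X_{i+1}^{\L}$ are all independent.

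Property (i) would handle the source class: interior vertices only exist in classes satisfying (B3), so $|X_j| \geq 2\ell-1$ and after removal $|X_j \setminus \{v\}| \geq 2\ell-2 \geq \ell-1$; the old buffers still satisfy (B2) or (B3) for $X_j \setminus \{v\}$, and so every (P1) and (P2) condition involving $X_j$ is preserved automatically. Property (ii) is exactly the target-side requirement. Each successful move strictly decreases $D$, so the induction terminates after finitely many steps.

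The crux will be showing that under (\ref{E:defToNondefCondition}) such a $v$ exists. I would write $A := X_{i-1}^{\R} \cup X_i$ and $B := X_i \cup X_{i+1}^{\L}$, each of size at most $2\ell - 3$, and note $A \cap B = X_i$. A vertex $v \notin A \cup B$ fails (ii) exactly when some block $\beta \ni v$ satisfies $\beta \setminus \{v\} \subseteq A$ or $\beta \setminus \{v\} \subseteq B$; in that case $v$ is the unique element of $\beta$ outside $A \cup B$, so no block can serve as a witness for two distinct outside bad vertices. I would bound the number of such witness blocks by double-counting pairs $(T,\beta)$ with $T$ a $t$-subset satisfying $T \subseteq A$ or $T \subseteq B$ and $T \subseteq \beta$: each witness block contributes at least $\binom{k-1}{t}$ such pairs (using $k-1 \geq t$), whereas the total number of good $t$-subsets is $\binom{|A|}{t} + \binom{|B|}{t} - \binom{|A \cap B|}{t}$ by inclusion-exclusion and each lies in at most $\l$ blocks. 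Taking worst-case sizes $|A|,|B| = 2\ell-3$ and $|A \cap B| = \ell-2$ would yield the bound $\l(2\binom{2\ell-3}{t} - \binom{\ell-2}{t})\binom{k-1}{t}^{-1}$ on the number of outside bad vertices.

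Finally, the non-interior vertices across all classes number at most $|X_i| + (s-1)\cdot 2(\ell-1) \leq (2s-1)(\ell-1) - 1$, since the deficient class $X_i$ contributes only $|X_i| \leq \ell - 2$ to $\sum_j |X_j^{\L} \cup X_j^{\R}|$ while every other class contributes at most $2(\ell-1)$. Combined with the bound on outside bad vertices, hypothesis (\ref{E:defToNondefCondition}) will give strictly more interior vertices than outside bad ones, furnishing the required $v$. The main step I expect to require care is the injectivity observation---that each witness block determines a unique outside bad vertex as its single element outside $A \cup B$---since this is exactly what enables the $-\binom{\ell-2}{t}$ improvement via inclusion-exclusion on $A \cap B = X_i$, and it relies essentially on $v$ having been chosen outside $A \cup B$ through interiority.
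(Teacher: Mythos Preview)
Your proof is correct and follows essentially the same approach as the paper: both arguments iteratively move an interior (non-buffer) vertex into a deficient class, bounding the forbidden vertices by the union of all buffer vertices (giving the $(2s-1)(\ell-1)-1$ term) and the vertices whose insertion would complete a block inside $X_{i-1}^{\R}\cup X_i$ or $X_i\cup X_{i+1}^{\L}$ (bounded by double-counting $t$-subsets, giving the $\l\bigl(2\binom{2\ell-3}{t}-\binom{\ell-2}{t}\bigr)\binom{k-1}{t}^{-1}$ term). Two minor points worth tightening: $A\cap B$ may properly contain $X_i$ when $s=2$, though this only improves your bound; and your ``worst-case sizes'' step implicitly uses that $|A|,|B|\leq (\ell-1)+|X_i|$ and $|A\cap B|\geq |X_i|$ are linked through $|X_i|$, with $2\binom{\ell-1+m}{t}-\binom{m}{t}$ increasing in $m\leq \ell-2$.
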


\begin{proof}
Let $(X,\B)$ be a partial $(n,k,t)_\l$-system and suppose that $(U_0,\ldots,U_{s-1})$ is an $\ell$-presequencing of $(X,\B)$. We will show that if $|U_j| \leq \ell-2$ for some $j \in \Z_s$, then there is an $\ell$-presequencing $(V_0,\ldots,V_{s-1})$ of $(X,\B)$ such that, for some $k \in \Z_s$ with $|V_k| \geq \ell$ and for some $y \in V_k$, we have $V_j=U_j \cup \{y\}$, $V_k=U_k \setminus \{y\}$ and $V_i=U_i$ for all $i \in \Z_s \setminus \{j,k\}$. This will suffice to complete the proof because iteratively applying this procedure will eventually result in a nondeficient $\ell$-presequencing.

Suppose that $|U_j| \leq \ell-2$ for some $j \in \Z_s$. Let $X^\dag$ be the set of vertices in the buffers, that is,
\[X^\dag=\medop\bigcup_{i=0}^{s-1}  (U^{\L}_i \cup  U^{\R}_i).\]
From (B1), $U^{\L}_j = U^{\R}_j =U_j$ and hence $|U^{\L}_j \cup U^{\R}_j|=|U_j| \leq \ell-2$. From (B1)--(B3), for each $i \in \Z_s \setminus \{j\}$, we have $|U^{\L}_{i} \cup U^{\R}_{i}| \leq 2\ell-2$. Thus
\begin{equation}\label{E:XdagSize}
|X^\dag| \leq (2s-1)(\ell-1)-1.
\end{equation}
Also note that $U^{\R}_{j-1} \cup U_j \cup U^{\L}_{j+1} \subseteq X^\dag$ since $U^{\L}_j = U^{\R}_j =U_j$. Now, we let $X^\ddag$ be the set of all vertices $x$ not in the buffers for which moving $x$ into $U_j$ would result in a block becoming a subset of $U^{\R}_{j-1} \cup U_j$ or of $U_j\cup U^{\L}_{j+1}$. That is, let
\[X^\ddag=\left\{x \in X \setminus X^\dag: \mbox{$B \setminus \{x\} \subseteq U^{\R}_{j-1} \cup U_j$ or $B \setminus \{x\} \subseteq  U_j \cup U^{\L}_{j+1}$ for some $B \in \B$}\right\}.\]
Observe that $|U^{\R}_{j-1}|, |U^{\L}_{j+1}| \leq \ell-1$ by (B1)--(B3). So the number of $t$-sets of vertices that are subsets of $U^{\R}_{j-1} \cup U_j$ or of $U_j\cup U^{\L}_{j+1}$ is at most
$2\tbinom{\ell-1+|U_j|}{t}-\tbinom{|U_j|}{t} \leq 2\tbinom{2\ell-3}{t}-\tbinom{\ell-2}{t}$,
where the inequality follows because $|U_j| \leq \ell-2$ by assumption. Each block of $\B$ containing $k-1$ elements of $U^{\R}_{j-1} \cup U_j$ or $k-1$ elements of $U_j\cup U^{\L}_{j+1}$ is a superset of $\binom{k-1}{t}$ of these $t$-subsets. Thus \begin{equation}\label{E:XddagSize}
|X^\ddag| \leq \l\Bigl(2\tbinom{2\ell-3}{t}-\tbinom{\ell-2}{t}\Bigr)\tbinom{k-1}{t}^{-1}.
\end{equation}

By \eqref{E:XdagSize}, \eqref{E:XddagSize} and the hypothesised lower bound on $n$, we have $n > |X^\dag| + |X^\ddag|$ and hence there is a vertex $y \in X \setminus (X^\dag \cup X^\ddag)$. Then $y \in U_k \setminus (U^{\L}_k \cup U^{\R}_k \cup X^\ddag)$ for some $k \in \Z_s \setminus \{j\}$. This implies that $|U_k| \geq 2\ell-1 \geq \ell$ by (B1) and (B2). We claim that $(V_0,\ldots,V_{s-1})$ is an $\ell$-presequencing of $(X,\B)$ where
\begin{itemize}
    \item
$(V_j,V^{\L}_j,V^{\R}_j)=(U_j \cup \{y\},U_j \cup \{y\},U_j \cup \{y\})$;
     \item
$(V_k,V^{\L}_k,V^{\R}_k)=(U_k \setminus \{y\},U^{\L}_k,U^{\R}_k)$; and
    \item
$(V_i,V^{\L}_i,V^{\R}_i)=(U_i,U^{\L}_i,U^{\R}_i)$ for all $i \in \Z_s \setminus \{j,k\}$.
\end{itemize}
To see this, observe that each class of $(V_0,\ldots,V_{s-1})$ is indeed an $\ell$-buffered set because $|U_j| \leq \ell-1$, $y \in U_k \setminus (U^{\L}_k \cup U^{\R}_k)$, and each class of $(U_0,\ldots,U_{s-1})$ is an $\ell$-buffered set. Further, (P1) and (P2) hold for $(V_0,\ldots,V_{s-1})$ because they held for $(U_0,\ldots,U_{s-1})$ and $y \notin X^\ddag$. So $(V_0,\ldots,V_{s-1})$ is an $\ell$-presequencing of $(X,\B)$ with the properties we desired and the proof is complete.
\end{proof}

In view of Lemmas~\ref{L:nondefToSeq} and \ref{L:defToNondef}, our final step is to show that partial $(n,k,t)_\l$-systems admit $\ell$-presequencings. This can be accomplished using the symmetric version of the Lov\'{a}sz local lemma, which we state below.

\begin{lemma}[see {\cite[Corollary 5.1.2]{AloSpe}}]\label{L:LLL}
Let $E_1,\ldots,E_r$ be events in a probability space such that each of these events has probability at most $p$ and is mutually independent from all but at most $d$ of the others. If $ep(d+1) \leq 1$, then there is a positive probability that no event in $\{E_1,\ldots,E_r\}$ occurs.
\end{lemma}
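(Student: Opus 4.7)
The plan is to prove, by induction on $|S|$, the stronger statement that
$\Pr[E_i \mid \bigcap_{j \in S} \bar{E}_j] \leq 1/(d+1)$
for every $i \in \{1,\ldots,r\}$ and every $S \subseteq \{1,\ldots,r\} \setminus \{i\}$ (on which the conditioning event has positive probability). The base case $S=\emptyset$ is immediate from the hypothesis, since $\Pr[E_i] \leq p \leq 1/(e(d+1))$. Once the stronger statement is established, the chain rule gives
$\Pr[\bigcap_{i=1}^r \bar{E}_i] = \prod_{i=1}^r \Pr[\bar{E}_i \mid \bigcap_{j<i} \bar{E}_j] \geq (d/(d+1))^r > 0$,
which is the desired conclusion.

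For the inductive step, I would split $S$ into the subset $S_1$ of indices $j$ for which $E_i$ is not mutually independent of $E_j$ (so $|S_1| \leq d$ by hypothesis) and $S_2 = S \setminus S_1$, and then write
\[
\Pr\left[E_i \;\middle|\; \bigcap_{j \in S} \bar{E}_j\right] = \frac{\Pr\left[E_i \cap \bigcap_{j \in S_1} \bar{E}_j \;\middle|\; \bigcap_{j \in S_2} \bar{E}_j\right]}{\Pr\left[\bigcap_{j \in S_1} \bar{E}_j \;\middle|\; \bigcap_{j \in S_2} \bar{E}_j\right]}.
\]
The numerator is at most $\Pr[E_i \mid \bigcap_{j \in S_2} \bar{E}_j]$, which equals $\Pr[E_i] \leq p$ by the mutual independence of $E_i$ from $\{E_j : j \in S_2\}$. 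For the denominator, I would enumerate $S_1$ as $\{j_1,\ldots,j_m\}$ with $m \leq d$, expand the probability via the chain rule into a product of $m$ factors of the form $\Pr[\bar{E}_{j_k} \mid \text{smaller conditioning event}]$, and apply the induction hypothesis to each factor to obtain the lower bound $(d/(d+1))^m \geq (d/(d+1))^d$. Substituting yields $\Pr[E_i \mid \bigcap_{j \in S} \bar{E}_j] \leq p((d+1)/d)^d$, and the target bound $1/(d+1)$ follows from the elementary inequality $(1 - 1/(d+1))^d \geq 1/e$ combined with the hypothesis $ep(d+1) \leq 1$.

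The main obstacle is identifying the correct strengthening of the target inequality to enable the induction: the naive claim $\Pr[E_i \mid \bigcap_{j \neq i} \bar{E}_j] \leq 1/(d+1)$ cannot be proved in isolation, because bounding its denominator via the chain rule already requires the same kind of conditional bound for smaller conditioning sets. Allowing an arbitrary subset $S$ of conditioning indices and inducting on $|S|$ breaks the circularity. The split into $S_1$ and $S_2$ is likewise essential: it is mutual independence of $E_i$ from the whole collection $\{E_j : j \in S_2\}$ (rather than merely pairwise independence) that licenses $\Pr[E_i \mid \bigcap_{j \in S_2} \bar{E}_j] = \Pr[E_i]$, so the mutual-independence hypothesis of the lemma is used precisely at this step.
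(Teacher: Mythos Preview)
Your argument is the standard inductive proof of the symmetric local lemma and is correct; in particular, the split $S=S_1\cup S_2$, the use of mutual independence to reduce the numerator to $\Pr[E_i]\leq p$, and the inequality $(1-1/(d+1))^d\geq 1/e$ are all sound. Note, however, that the paper does not supply its own proof of this lemma: it is quoted as \cite[Corollary~5.1.2]{AloSpe} and used as a black box, so there is no in-paper argument to compare against. Your write-up is essentially the proof given in that reference.
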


\begin{lemma}\label{L:findDef}
Any partial $(n,k,t)_\l$-system has an $\ell$-presequencing $(U_0,\ldots,U_{s-1})$ such that $s=\lceil\sigma\rceil$, where
\[\sigma=\sigma(n,k,t,\l)=\Bigl(e(2^k-1)\left(k\l\tbinom{n-1}{t-1}\tbinom{k-1}{t-1}^{-1}-k+1\right)\Bigr)^{\frac{1}{k-1}}.\]
\end{lemma}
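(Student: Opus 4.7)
The plan is to apply the symmetric Lovász Local Lemma (Lemma~\ref{L:LLL}) to a random partition $(U_0,\ldots,U_{s-1})$ of $X$ in which each vertex is independently and uniformly assigned to one of the $s=\lceil\sigma\rceil$ classes. The crux is that if every $U_i\cup U_{i+1}$ (with $i\in\Z_s$, indices cyclic) is an independent set of $(X,\B)$, then both (P1) and (P2) hold for any legal choice of buffers, because $U_i\subseteq U_{i-1}\cup U_i$ and $U_i^{\R}\cup U_{i+1}^{\L}\subseteq U_i\cup U_{i+1}$. Given such a partition, buffers satisfying (B1)--(B3) can then be attached to each $U_i$ in an ad~hoc way — for instance, by fixing an arbitrary ordering of $U_i$ and taking $U_i^{\L}$ and $U_i^{\R}$ to be its first and last $\ell-1$ elements when $|U_i|\geq\ell-1$, and setting $U_i^{\L}=U_i^{\R}=U_i$ otherwise.

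For the bad events I would use the slightly sharper choice
\[
F_{B,i}:\qquad B\subseteq U_i\cup U_{i+1}\quad\text{and}\quad B\cap U_i\neq\emptyset,
\]
rather than the naive event $B\subseteq U_i\cup U_{i+1}$. This has probability $p=(2/s)^k-(1/s)^k=(2^k-1)/s^k$, which is where the factor $2^k-1$ in the definition of $\sigma$ originates. A short case check confirms that whenever $B\subseteq U_j\cup U_{j+1}$ for some $j\in\Z_s$, one of $F_{B,j}$ (if $B\cap U_j\neq\emptyset$) or $F_{B,j+1}$ (if $B\subseteq U_{j+1}$) must occur, so ruling out every $F_{B,i}$ is already enough to ensure each $U_i\cup U_{i+1}$ is independent.

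Each $F_{B,i}$ depends only on the class assignments of the $k$ vertices in $B$, so $F_{B,i}$ and $F_{B',i'}$ are mutually independent whenever $B\cap B'=\emptyset$. A standard double-count — each block through a fixed vertex $v$ accounts for $\binom{k-1}{t-1}$ of the $\binom{n-1}{t-1}$ $t$-subsets through $v$, and each such $t$-subset lies in at most $\l$ blocks — shows that any vertex belongs to at most $M:=\l\binom{n-1}{t-1}\binom{k-1}{t-1}^{-1}$ blocks. Summing over $v\in B$ and allowing for $B$'s repeated contribution yields that $B$ meets at most $kM-k+1$ blocks (including $B$), and hence the dependency degree of $F_{B,i}$ is at most $d=s(kM-k+1)-1$. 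The LLL condition $ep(d+1)\leq 1$ then reduces to $s^{k-1}\geq e(2^k-1)(kM-k+1)$, which the choice $s=\lceil\sigma\rceil$ delivers. I do not anticipate any genuine obstacle beyond this constant-matching; the only delicate move is the refinement to $F_{B,i}$, which is essential for obtaining $2^k-1$ rather than $2^k$ in $\sigma$.
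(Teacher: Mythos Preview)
Your proposal is correct and follows essentially the same route as the paper: a uniformly random partition into $s=\lceil\sigma\rceil$ classes, followed by the symmetric Lov\'{a}sz local lemma, with buffers attached afterwards. The only cosmetic difference is that the paper aggregates your events $F_{B,i}$ over $i\in\Z_s$ into a single event $E_B=\bigcup_i F_{B,i}$ per block (computing $\Pr(E_B)=(2^k-1)/s^{k-1}$ via exactly your disjoint decomposition) and thereby gets dependency degree $d=kM-k$; since your version has $p$ smaller by a factor of $s$ and $d+1$ larger by the same factor, the LLL inequality $ep(d+1)\leq 1$ is identical in both treatments.
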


\begin{proof}
Let $(X,\B)$ be a partial $(n,k,t)_\l$-system and let $s=\lceil\sigma\rceil$. Suppose for the moment that there is an indexed partition $\{U_0,\ldots,U_{s-1}\}$ of $X$ such that $U_i \cup U_{i+1}$ is an independent set of $(X,\B)$ for each $i \in \Z_s$. For each $i \in \Z_s$, we can easily choose subsets $U^{\L}_i$ and $U^{\R}_i$ of $U_i$ that obey (B1)--(B3). The resulting tuple $(U_0,\ldots,U_{s-1})$ of $\ell$-buffered sets certainly obeys (P1) and (P2) and hence is an $\ell$-presequencing of $(X,\B)$. Thus it suffices to prove there is such an indexed partition $\{U_0,\ldots,U_{s-1}\}$. We do so using the Lov\'{a}sz local lemma.

Consider forming an indexed partition $\{U_0,\ldots,U_{s-1}\}$ of $X$ by assigning each vertex in $X$ to one of the partition classes independently uniformly at random. For each block $B \in \B$, let $E_B$ be the event that $B \subseteq U_i \cup U_{i+1}$ for some $i \in \Z_s$. Let $B$ be a block in $\B$ and $j$ be an element of $\Z_s$. Then the probability that $B \subseteq U_j \cup U_{j+1}$ is $(\frac{2}{s})^k$, the probability that $B \subseteq U_{j+1}$ is $(\frac{1}{s})^k$ and hence the probability that $B \subseteq U_j \cup U_{j+1}$ but $B \nsubseteq U_{j+1}$ is $(\frac{2}{s})^k-(\frac{1}{s})^k$. Taking the sum over $j \in \Z_s$, we see that the probability $p$ of $E_B$ is given by
\begin{equation}\label{E:findDefp}
p=s\left(\mfrac{2^k}{s^k}-\mfrac{1}{s^k}\right)=\mfrac{2^k-1}{s^{k-1}}.
\end{equation}

For each $B \in \B$ and $\B' \subseteq \B$, $E_B$ is mutually independent from the set of events $\{E_{B'}:B' \in \B'\}$ provided that each block in $\B'$ is disjoint from $B$. Let $x$ be a vertex in $B$. The number of $t$-subsets of $X$ that contain $x$ is $\binom{n-1}{t-1}$. Each block in $\B$ that contains $x$ (including $B$ itself) is a superset of $\binom{k-1}{t-1}$ of these $t$-subsets. Thus there are at most $\l\binom{n-1}{t-1}/\binom{k-1}{t-1}-1$ blocks in $\B \setminus \{B\}$ that contain $x$. Taking the sum over $x \in B$, we see that the number $d$ of blocks in $\B \setminus \{B\}$ that contain at least one vertex in $B$ obeys
\begin{equation}\label{E:findDefd}
d \leq k\l\tbinom{n-1}{t-1}\tbinom{k-1}{t-1}^{-1}-k.
\end{equation}
Thus, $E_B$ is mutually independent from all but at most $d$ of the events in $\{E_{B}:B \in \B \setminus \{B\}\}$.

Now, using \eqref{E:findDefp} and \eqref{E:findDefd} and the fact that $s \geq \sigma$, we have that $ep(d+1) \leq 1$. Thus, by Lemma~\ref{L:LLL}, there is a positive probability that no event in $\{E_B:B \in \B\}$ occurs. This implies there is an indexed partition $\{U_0,\ldots,U_{s-1}\}$ of $X$ such that $U_i \cup U_{i+1}$ is an independent set of $(X,\B)$ for each $i \in \Z_s$ and the proof is complete.
\end{proof}

From Lemmas~\ref{L:nondefToSeq}, \ref{L:defToNondef} and \ref{L:findDef} it follows that, for any parameter set $(n,k,t)_\l$, every partial $(n,k,t)_\l$-system has a cyclically $\ell$-good sequencing provided that \eqref{E:defToNondefCondition} holds with $s=\lceil\sigma\rceil$ where $\sigma$ is as defined in Lemma~\ref{L:findDef}. We use this fact to prove Theorems~\ref{T:main} and \ref{T:main2}.

\begin{proof}[\textbf{\textup{Proof of Theorem~\ref{T:main}.}}]
Given a partial $(n,k,t)_\l$-system with $k \geq t+2$, one can delete $k-t-1$ vertices from each of its blocks to obtain a partial $(n,t+1,t)_\l$-system. Any cyclically $\ell$-good sequencing of the resulting partial system is also a cyclically $\ell$-good sequencing of the original partial system. Thus it suffices to prove the theorem in the case where $k=t+1$.

Let $(X,\B)$ be a partial $(n,t+1,t)_\l$-system, let $k=t+1$, and let $s=\lceil\sigma\rceil$ where $\sigma$ is as defined in Lemma~\ref{L:findDef}. Let $\alpha$ be the positive real number that obeys \eqref{E:k=t+1} and let $\ell = \lfloor\alpha n^{1/t}\rfloor$. We may assume that $\ell > t$ for otherwise the result is trivial. As discussed above, there is an $\ell$-good sequencing of $(X,\B)$ provided that \eqref{E:defToNondefCondition} holds. Using $k=t+1$, \eqref{E:defToNondefCondition} simplifies to
\begin{equation}\label{E:defToNondefConditionSpec}
n > \l\Bigl(2\tbinom{2\ell-3}{t}-\tbinom{\ell-2}{t}\Bigr)+(2s-1)(\ell-1)-1.
\end{equation}
Now,
\begin{equation}\label{E:proofUpper1}
(2s-1)(\ell-1) < 2\ell\sigma + \ell < 2\ell\left(\mfrac{e\l(t+1)(2^{t+1}-1)}{t}\tbinom{n-1}{t-1}\right)^{\frac{1}{t}} +\ell \leq 2\alpha\Bigl(\mfrac{e\l(t+1)(2^{t+1}-1)}{t!}\Bigr)^{\frac{1}{t}}n + \ell
\end{equation}
where the first inequality follows using $s<\sigma+1$, the second follows using the definition of $\sigma$ and $k=t+1$, and the third follows using $\binom{n-1}{t-1} < \frac{n^{t-1}}{(t-1)!}$ and $\ell \leq \alpha n^{1/t}$. Also,
\begin{multline}
  \l\Bigl(2\tbinom{2\ell-3}{t}-\tbinom{\ell-2}{t}\Bigr) \leq \mfrac{\l(\ell-2)}{t} \Bigl(4\tbinom{2\ell-3}{t-1}-\tbinom{\ell-3}{t-1}\Bigr) < \mfrac{\l(\ell-2)}{t} \left(\mfrac{4(2\ell)^{t-1}}{(t-1)!}-\mfrac{\ell^{t-1}}{(t-1)!}\right)= \\[1mm]
\mfrac{\l(2^{t+1}-1)(\ell-2)\ell^{t-1}}{t!} < \mfrac{\l(2^{t+1}-1)\ell^t}{t!} - \ell \leq  \mfrac{\l(2^{t+1}-1)\alpha^t}{t!}\,n - \ell. \label{E:proofUpper2}
\end{multline}
The second inequality above follows because $\frac{4(2\ell)^{t-1}}{(t-1)!}-4\binom{2\ell-3}{t-1} > \frac{\ell^{t-1}}{(t-1)!}-\binom{\ell-3}{t-1}$ since $f(a)=\frac{a^{t-1}}{(t-1)!}-\binom{a-3}{t-1}$ is an increasing function of $a$. The third follows by observing that $2\l(2^{t+1}-1)\ell^{t-1}>t!\ell$ since $\ell > t$. Using \eqref{E:proofUpper1} and \eqref{E:proofUpper2}, the right hand side of \eqref{E:defToNondefConditionSpec} is strictly less than
\[\left(\mfrac{\l(2^{t+1}-1)}{t!}\alpha^t + 2\Bigl(\mfrac{e\l(t+1)(2^{t+1}-1)}{t!}\Bigr)^{\frac{1}{t}}\alpha\right)n\]
and so \eqref{E:defToNondefConditionSpec} holds because $\alpha$ obeys \eqref{E:k=t+1}.
\end{proof}


\begin{proof}[\textbf{\textup{Proof of Theorem~\ref{T:main2}.}}]
Let $(X,\B)$ be a partial $(n,k,t)_\l$-system and let $s=\lceil\sigma\rceil$ where $\sigma$ is as defined in Lemma~\ref{L:findDef}. Let $\alpha$ be a positive constant that obeys \eqref{E:k>=t+2} and let $\ell = \lfloor\alpha n^{1/t}\rfloor$. Again, there is an $\ell$-good sequencing of $(X,\B)$ provided that \eqref{E:defToNondefCondition}, which we restate below, holds.
\begin{equation*}
n > \l\Bigl(2\tbinom{2\ell-3}{t}-\tbinom{\ell-2}{t}\Bigr)\tbinom{k-1}{t}^{-1}+(2s-1)(\ell-1)-1 \tag{\ref{E:defToNondefCondition}}
\end{equation*}
Observe that
\begin{equation}\label{E:proofAsymp1}
  \l\Bigl(2\tbinom{2\ell-3}{t}-\tbinom{\ell-2}{t}\Bigr)\tbinom{k-1}{t}^{-1} < \l\left(\mfrac{2^{t+1}\ell^t}{t!}-\mfrac{\ell^t}{t!}\right)\tbinom{k-1}{t}^{-1}
  \leq \mfrac{\l(2^{t+1}-1)\alpha^t}{(k-1)\cdots(k-t)}\,n
\end{equation}
where the first inequality follows because $\frac{2(2\ell)^{t}}{t!}-2\binom{2\ell-3}{t} > \frac{\ell^{t}}{t!}-\binom{\ell-2}{t}$ and the second follows using $\ell^t \leq \alpha^t n$. Furthermore $\ell=\Theta(n^{1/t})$ and, using the definition of $\sigma$, $s=\Theta(n^{(t-1)/(k-1)})$. So $(2s-1)(\ell-1)=o(n)$ because $\frac{t-1}{k-1}+\frac{1}{t}<1$ since $k \geq t+2$. Thus, by \eqref{E:proofAsymp1}, the right hand side of \eqref{E:defToNondefCondition} is
\[\mfrac{\l(2^{t+1}-1)\alpha^t}{(k-1)\cdots(k-t)}\,n+o(n)\]
and hence \eqref{E:defToNondefCondition} holds for all sufficiently large $n$ because $\alpha$ obeys \eqref{E:k>=t+2}.
\end{proof}

\section{Conclusion}

Our proof uses the fact that we are sequencing a partial $(n,k,t)_\l$-system, rather than an arbitrary $k$-uniform hypergraph, at only two points:
\begin{itemize}
    \item
to bound the maximum number of blocks that may have all but one of their points in one of two overlapping sets of consecutive vertices in Lemma~\ref{L:defToNondef},
    \item
to bound the maximum number of blocks that may intersect a given block in Lemma~\ref{L:findDef}.
\end{itemize}
This suggests that our approach here may be able to be generalised to other settings, including to partial directed designs. Good orderings of partial directed triple systems have been previously considered in \cite{KreStiVei1,KreStiVei2}.

We conclude by mentioning one more connection between $\ell$-good sequencings and other parameters of hypergraphs. In \cite{PavSanSte} the authors say that a $k$-uniform hypergraph contains the \emph{$j$th power of a hamilton cycle} if its vertices can be cyclically ordered so that every $k$-subset of $k+j-1$ consecutive vertices is one of its hyperedges. Then the existence of an $\ell$-good ordering of an $(n,k,t)$-Steiner system $(X,\B)$ is exactly equivalent to the existence of the $(\ell-k+1)$st power of a hamilton cycle in the hypergraph $(X,\overline{\B})$ where $\overline{\B}=\{E \subseteq X: |E|=k, E \notin \B\}$. This is a very different regime to the one that \cite{PavSanSte} focusses on, where the power of the hamilton cycle is constant as the order of the hypergraph grows. 

\bigskip
\noindent\textbf{Acknowledgments.}
The first author was supported by Australian Research Council grants DP150100506 and FT160100048.

\bigskip
\noindent\textbf{Data availability statement.}
Data sharing not applicable to this article as no datasets were generated or analysed during the current study.

\end{document}